\documentclass[11pt,a4paper]{scrartcl}

\usepackage{mystyle-koma-enhanced}
\usepackage{tikz}
\usetikzlibrary{patterns}
\usetikzlibrary{decorations.pathmorphing}
\definecolor{albiceleste}{RGB}{130,130,245}
\definecolor{skyblue}{RGB}{210,220,255}
\tikzset{vertex/.style={circle,draw,inner sep=0pt,minimum size=6pt}}

\renewcommand{\P}{\mathbb{P}}
\newcommand{\E}{\mathbb{E}}
\newcommand{\N}{\mathbb{N}}
\newcommand{\Z}{\mathbb{Z}}
\newcommand{\R}{\mathbb{R}}
\newcommand{\1}{\mathbb{1}}

\newcommand{\dG}{\operatorname{d}_{G}}
\newcommand{\Reff}{\mathcal{R}_{\mathrm{eff}}}

\crefname{assumption}{assumption}{assumptions}
\Crefname{assumption}{Assumption}{Assumptions}


\title{Recurrence of strong-decay inhomogeneous long-range percolation clusters}
\author{Johannes B\"aumler \orcidlink{0000-0002-3823-9899} \and Lukas L\"uchtrath \orcidlink{0000-0003-4969-806X} \and Christian M\"onch \orcidlink{0000-0002-6531-6482}}
\date{August 2026}

\hypersetup{
  pdftitle={Recurrence of strong-decay inhomogeneous long-range percolation clusters},
  pdfauthor={Johannes Bäumler, Lukas Lüchtrath, Christian Mönch}
}

\MSC{60K35, 05C80, 82B43}
\Keywords{long-range percolation, recurrence, spatial random graph, scale-free percolation, weight-dependent random connection model}

\begin{document}
\maketitle

\begin{abstract}
\noindent We prove recurrence criteria for inhomogeneous long-range percolation in
dimensions one and two.  In dimension one, recurrence follows from a purely
geometric scarcity condition: long edges eventually disappear on exponential
scales.  This applies to weight-dependent random connection models and related
one-dimensional spatial scale-free graphs whenever the standard strong-decay
long-edge estimate holds.  In dimension two, we combine the linear
chemical-distance estimate of L\"uchtrath with an area-order bound on the
degree measure.  Graph-distance layers in exponentially separated bands then
give the required Nash--Williams cutsets for planar random geometric graphs
satisfying the polynomial mixing and long-edge estimates
[\emph{J.\ Theoret.\ Probab.}~\textbf{39} (2026), Paper No.~12].  As a concrete consequence, every
connected component of the two-dimensional
weight-dependent random connection model with interpolation kernel is
recurrent throughout the strong-decay region
\[
  \delta>2,\qquad
  \gamma<1-\frac1\delta,\qquad
  \alpha<1-\gamma.
\]
\end{abstract}

\section{Introduction}
\label{sec:introduction}

The recurrence and transience of random walks on long-range percolation
clusters is sensitive to dimension, degree tails, and the abundance of long
edges.  In homogeneous long-range percolation, Berger \cite{Berger2002} showed that the return properties of the simple random walk on the infinite cluster depend on the asymptotic probability of seeing long edges, $\mathbb{P}(x\sim y)$, with the transition between the recurrent and the transient regime happening when $\mathbb{P}(x\sim y) \approx \|x-y\|^{-2d}$ in dimensions one and two.  The question becomes more
delicate for inhomogeneous long-range models, such as spatial scale-free
percolation and the weight-dependent random connection model (WDRCM), because the
vertex weights may create heavy-tailed degrees even when very long edges are scarce
\cite{GHMM2022,DeijfenHofstadHooghiemstra2013,KrioukovEtAl2010,
BringmannKeuschLengler2019,GGLM2019,GLM2021,GGM22}.

In this note, we give sufficient conditions for recurrence based on the absence of long edges and the tail decay of the degree distribution in dimensions one and two. The regimes in which we establish recurrence are essentially optimal, since transience was established slightly outside these regimes in \cite{Moench2024, GHMM2022}. The
one-dimensional result is an electrical-network criterion on the line.  If,
almost surely, only finitely many large dyadic annuli contain an edge crossing over a fixed cut, then there will, almost surely, only be finitely many edges crossing this cut. By translation invariance, this will occur infinitely often almost surely, and an application of the Nash-Williams inequality shows that the effective resistance between $0$ and $\infty$ equals $+\infty$ almost surely, implying the recurrence of the simple random walk.
This argument does not require independence of the edges, and applies directly to
WDRCMs satisfying the strong-decay long-edge estimates from
\cite{GracarLuechtrathMoench2025, jacobJahLu2024}, thereby completing the phase diagram away from the open boundaries left by~\cite{GHMM2022,Moench2024,Baeumler2023_recurrence}.

The argument in dimension $d=2$ is different.  The recurrence follows from the existence of
infinitely many annuli $\left\{x \in \R^2 : r \leq |x| \leq 2r\right\}$ in which the graph distance across the
annulus is linear in its Euclidean width and the number of edges in the inner
ball is of order area.  The resulting graph-distance layers form
edge-disjoint cutsets, so the Nash--Williams criterion gives a
uniform resistance contribution at each such good scale.  We verify the existence of such annuli
for any stationary planar random geometric graph satisfying the polynomial
mixing and long-edge estimates used by the second author in the long-range
chemical-distance work \cite{lue2024spatialrandomgraphsweak}.
For the so-called
interpolation kernel, the required strong-decay region is exactly the
negative-\(\zeta\) phase identified by Jacob et al.\
\cite{jacobJahLu2024}.

\section{Main Results}
\label{sec:main-results}

We regard every graph as an electrical network with unit conductances unless
conductances are explicitly specified.  A connected component is recurrent if
the effective resistance from one, equivalently every, vertex of that
component to infinity is infinite.  Finite components are recurrent by
convention.

For a set \(A\), write \(A^{[2]} \coloneqq \left\{ \{x,y\} \subset A : x \neq y  \right\}\) for the unordered pairs of distinct
elements of \(A\).  A random geometric graph \(G=(V,E)\) in \(\R^d\) is a random graph with $V\subset \R^d, E \subset V^{[2]}$. We say that this graph is locally finite if, almost surely, \(V\cap K\) is finite for every
compact \(K\subset\R^d\) and every vertex has finite graph degree.  In
particular, \(V\) is almost surely countable.  

Our motivating example is the weight-dependent random connection model (WDRCM) that is defined as follows. The vertex set \(V\) is a standard Poisson point process on \(\R^d\) of intensity \(\lambda>0\). We identify each vertex with its location \(v\in\R^d\). Each vertex has a mark \(u_v\in(0,1)\), and, conditional on $V$, the marks $(u_v)_{v \in V}$ are i.i.d.\ random variables that are uniformly distributed on $(0,1)$. Then, we choose a decreasing \emph{profile} function \(\rho\colon (0,\infty)\to[0,1]\) and a symmetric and coordinate-wise decreasing \emph{kernel} function \(g:(0,1)^2\to(0,\infty)\) and, given a pair of vertices \(v,w\) and their marks \(u_v,u_w\), we connect them with probability \(\rho(g(u_v,u_w)|v-w|^d)\), independent of all other connections. A kernel of particular interest is the \emph{interpolation kernel} \(g(u_v,u_w)=(u_v\wedge u_w)^\gamma (u_v\vee u_w)^\alpha\), where \(\alpha,\gamma\geq 0\) are parameters, whereas the profile is typically chosen to be either \(\rho(x)=1\wedge x^{-\delta}\), for some \(\delta>1\), or \(\rho=\1_{[0,1]}\). In previous literature, the same model was also considered with $V=\Z^d$ instead of $\R^d$, cf. \cite{berger2004,Berger2002,Baeumler2023_recurrence}.

In dimension \(d=2\) there always exists an infinite cluster if \(\lambda\) is large enough~\cite{GLM2021}. This is not always the case in dimension \(d=1\), where the existence of an infinite cluster was studied in~\cite{GracarLuechtrathMoench2025}. If there does not exist an infinite cluster in dimension $d=1$, we augment the graph by nearest-neighbour edges in the following sense: We assume that the vertices of the Poisson processes are ordered by location, i.e.\ \(V=(v_i)_{i\in\Z}\) with \(v_i<v_j\) if \(i<j\), and add all edges \(\{v_i,v_{i+1}\}\), \(i\in\Z\) to the graph. 
Note that when $V = \Z$, one can also always enforce the existence of an infinite cluster by considering $\rho$ and $g$ with $\rho(g(s,t)\cdot 1)=1$ for all $s,t \in (0,1)$.


\paragraph{Recurrence in dimension \(\boldsymbol{d=1}\). }
Consider a random geometric graph on the line, i.e.\ \(V\subset\R\) and define the number
of edges above the cut \(0+\) by
\[
  \mathsf X
  \coloneqq 
  \sum_{x\in V:x\le 0}
  \sum_{y\in V:y>0}
  \1\{\{x,y\}\in E\}.
\]

\begin{theorem}[Recurrence in dimension one]
\label{thm:one-dimensional-criterion}
Let \(G=(V,E)\) be a locally finite random geometric graph with
\(V\subset\R\).  Assume that
its law is invariant and ergodic under translations by \(\Z\).  If
\[
  \P(\mathsf X<\infty)>0,
\]
then every connected component of \(G\) is recurrent almost surely.
\end{theorem}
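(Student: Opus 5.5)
We will combine ergodicity, to produce infinitely many cuts on both sides of the origin that are crossed by finitely many edges, with the Nash--Williams inequality.

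\emph{Step 1: many finite cuts.} For \(k\in\Z\) let \(\mathsf X_k\) be the number of edges \(\{x,y\}\in E\) with \(x\le k<y\); thus \(\mathsf X_0=\mathsf X\) and \(\mathsf X_k=\mathsf X\circ\tau_k\) for the shift \(\tau_k\) by \(k\). Choose \(M<\infty\) such that \(p\coloneqq\P(\mathsf X\le M)>0\); this is possible because \(\P(\mathsf X<\infty)>0\). The events \(\{\mathsf X_k\le M\}\) are translates of one event, so Birkhoff's ergodic theorem applied along \(k\to+\infty\) and along \(k\to-\infty\) gives, almost surely,
\[
  \frac1n\sum_{k=1}^n \1\{\mathsf X_k\le M\}\to p>0
  \quad\text{and}\quad
  \frac1n\sum_{k=1}^n \1\{\mathsf X_{-k}\le M\}\to p>0 .
\]
Hence, almost surely, there are integers \(k_1<k_2<\dots\) tending to \(+\infty\) and \(k_{-1}>k_{-2}>\dots\) tending to \(-\infty\) with \(\mathsf X_{k_j}\le M\) for all \(j\). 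Passing to a subsequence, we may also assume that \(k_{j+1}-k_j\) is large enough to separate the cuts.

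\emph{Step 2: disjoint cutsets.} Fix a vertex \(v\) in an infinite component \(C\). Finite components are recurrent by convention, and \(V\) is countable, so a union bound over vertices suffices. Take \(j_0\) with \(k_{-j_0}<v\le k_{j_0}\). For \(j\ge j_0\), let \(\Pi_j\) be the set of edges crossing \(k_j\) or crossing \(k_{-j}\). Then \(|\Pi_j|\le 2M\). Local finiteness makes \(V\cap[k_{-j},k_j]\) finite, so any infinite path from \(v\) eventually leaves this interval and must use an edge of \(\Pi_j\). Thus \(\Pi_j\) separates \(v\) from \(\infty\) in \(C\).

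The sets \(\Pi_j\) need not be disjoint, because a single edge may cross many cuts. This is the main point requiring care. Each edge has finite length, but edge lengths are unbounded, so we handle this by a greedy thinning. Given \(\Pi_{j}\), which is finite, every edge in it has both endpoints in some bounded interval \([-L,L]\). Choose the next cut index \(j'\) so that \(k_{j'}>L\) and \(k_{-j'}<-L\). Then no edge of \(\Pi_j\) crosses \(k_{j'}\) or \(k_{-j'}\), so \(\Pi_{j'}\) is disjoint from \(\Pi_j\). Iterating this construction gives infinitely many pairwise disjoint cutsets \(\Pi^{(1)},\Pi^{(2)},\dots\), each of size at most \(2M\).

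\emph{Step 3: Nash--Williams.} The Nash--Williams inequality gives
\[
  \Reff(v\leftrightarrow\infty)\ \ge\ \sum_{i\ge1}\frac{1}{|\Pi^{(i)}|}\ \ge\ \sum_{i\ge1}\frac1{2M}=\infty .
\]
Hence \(C\) is recurrent. This holds simultaneously for all vertices, since the cut sequence from Step 1 does not depend on \(v\). We therefore expect no real obstacle beyond the disjointness bookkeeping in Step 2. The ergodicity input is exactly what is needed to turn \(\P(\mathsf X<\infty)>0\) into infinitely many good cuts on both sides of the origin.
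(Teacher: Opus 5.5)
Your proof is correct and follows the paper's argument. You apply the ergodic theorem in both directions to obtain infinitely many cuts on each side crossed by at most $M$ edges, and then apply Nash--Williams to pairwise disjoint two-sided cutsets of size at most $2M$. The only difference is how you make the cutsets disjoint: the paper puts a length bound into the stationary good event (every crossing edge has both endpoints within distance $m$ of the cut), so spacing the cuts by more than $2m$ suffices. Your adaptive greedy choice places the next cuts outside the bounded interval that contains the previously chosen cutset edges, and this gets the same result without strengthening the event.
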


We shall use the following easy-to-check sufficient condition.  We say that
long edges eventually disappear at exponential scales if
\begin{equation}
\label{eq:exponential-long-edge-disappearance}
  \sum_{k\ge 1}
  \P\big(
    \exists x\in V\cap[-2^k,2^k],\,y\in V:
    |x-y|\ge 2^k,\ \{x,y\}\in E
  \big)<\infty.
\end{equation}

\begin{corollary}[Strong-decay graphs on the line]
\label{cor:one-dimensional-long-edges}
Let \(G=(V,E)\) be as in \Cref{thm:one-dimensional-criterion}.  If
\eqref{eq:exponential-long-edge-disappearance} holds, then every connected
component of \(G\) is recurrent almost surely.
\end{corollary}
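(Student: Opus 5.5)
The plan is to reduce the corollary to \Cref{thm:one-dimensional-criterion}: from \eqref{eq:exponential-long-edge-disappearance} we will show that $\mathsf X<\infty$ almost surely, so in particular $\P(\mathsf X<\infty)>0$.

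For $k\ge1$ let $A_k$ be the event in \eqref{eq:exponential-long-edge-disappearance}, namely that some vertex $x\in V\cap[-2^k,2^k]$ has an incident edge of length at least $2^k$. The summability of $\P(A_k)$ and the Borel--Cantelli lemma give that almost surely only finitely many $A_k$ occur. Hence there is a random $K<\infty$ such that, for all $k\ge K$, every edge with an endpoint in $[-2^k,2^k]$ has length less than $2^k$.

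Next we cover the edges counted by $\mathsf X$. Such an edge $\{x,y\}$ has $x\le0<y$, and we split according to the scale of its length.

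If $|x-y|\ge 2^K$, pick $k\ge K$ with $2^k\le|x-y|<2^{k+1}$. Both endpoints lie within distance $|x-y|$ of the cut point $0$, so $|x|<2^{k+1}$. Thus $x\in[-2^{k+1},2^{k+1}]$ and $|x-y|\ge 2^k$. This is not yet a contradiction with $A_{k+1}^c$, because $A_{k+1}^c$ only forbids lengths at least $2^{k+1}$. We fix this by comparing with $A_k$ instead: one of the endpoints lies within distance $|x-y|/2$ of $0$, since $0$ separates them. Hence some endpoint $z$ satisfies $|z|\le|x-y|/2<2^k$, so $z\in[-2^k,2^k]$ and $|x-y|\ge2^k$. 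This is exactly the event $A_k$ with $k\ge K$, which is impossible.

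Therefore every edge counted by $\mathsf X$ has length less than $2^K$, and in particular both endpoints lie in $[-2^K,2^K]$. Local finiteness says $V\cap[-2^K,2^K]$ is finite almost surely, so only finitely many such pairs exist. This gives $\mathsf X<\infty$ almost surely, and \Cref{thm:one-dimensional-criterion} applies, since the hypotheses of invariance, ergodicity and local finiteness are assumed.

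There is no real obstacle here. The only point needing care is the endpoint-versus-scale bookkeeping: choosing the endpoint closer to $0$ keeps it in the window $[-2^k,2^k]$ at the same dyadic scale $k$ as the edge length. We also note that $K$ is random, but it is almost surely finite, which is all the argument needs.
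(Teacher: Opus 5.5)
Your proof is correct and follows essentially the same route as the paper: Borel--Cantelli applied to the events in \eqref{eq:exponential-long-edge-disappearance}, then the observation that a crossing edge of length in $[2^k,2^{k+1})$ has an endpoint within distance $|x-y|/2<2^k$ of the origin and so triggers the $k$-th event, and finally local finiteness to get $\mathsf X<\infty$ almost surely. The paragraph comparing with $A_{k+1}$ is an abandoned false start and can be deleted, and you should take $K\ge1$ since the events are only defined for $k\ge1$.
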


\begin{corollary}[One-dimensional WDRCM]
\label{cor:one-dimensional-wdrcm}
Consider the one-dimensional WDRCM augmented by nearest-neighbour edges with profile function \(\rho(x)=1\wedge x^{-\delta}\) and the interpolation kernel. If 
\[
  \delta>2,\qquad
  \gamma<1-\frac1\delta,\qquad
  \alpha<1-\gamma,
\]
%
%
then the graph, equivalently its unique infinite component, is recurrent
almost surely.
\end{corollary}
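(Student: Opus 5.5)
We deduce the statement from \Cref{cor:one-dimensional-long-edges}. The augmented one-dimensional WDRCM is locally finite and its law is invariant and ergodic under translations by \(\Z\). The Poisson process is stationary and mixing, the marks are i.i.d., the edges are conditionally independent given locations and marks, and the nearest-neighbour augmentation is a translation-equivariant function of the configuration. Local finiteness follows from \(\E[\deg]<\infty\), which we check below. Since the augmented graph is connected, it is its own unique infinite component. It therefore suffices to verify \eqref{eq:exponential-long-edge-disappearance}.

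\textbf{Reduction to a Mecke-formula bound.} Nearest-neighbour edges longer than \(2^k\) with an endpoint in \([-2^k,2^k]\) require a Poisson gap of length \(2^k\). This has probability at most \(C2^k e^{-\lambda 2^k}\), which is summable in \(k\). For the WDRCM edges, we use a union bound and the Mecke formula:
\[
\P(\cdots)\le \lambda^2\int_{-2^k}^{2^k}\!dx\int_{|y-x|\ge 2^k}\!dy\int_0^1\!\!\int_0^1 \rho\big(g(s,t)|x-y|\big)\,ds\,dt .
\]
This equals \(C\,2^k\int_{2^k}^\infty I(r)\,dr\), where \(I(r)=\int\!\!\int \rho(g(s,t)r)\,ds\,dt\). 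So we need to show \(\sum_k 2^k\int_{2^k}^\infty I(r)\,dr<\infty\). A convenient sufficient condition is \(I(r)\le C r^{-2-\varepsilon}\) for some \(\varepsilon>0\), since then each term is \(O(2^{-\varepsilon k})\). This is the standard strong-decay long-edge estimate of \cite{GracarLuechtrathMoench2025, jacobJahLu2024}.

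\textbf{Computing \(I(r)\), the main step.} By symmetry we take \(s<t\), so \(g=s^\gamma t^\alpha\), and split the domain according to whether \(s^\gamma t^\alpha r\le 1\).

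On the region \(s^\gamma t^\alpha r\ge 1\), the integrand is \(r^{-\delta}s^{-\gamma\delta}t^{-\alpha\delta}\). This gives \(r^{-\delta}\) times an integral that is finite if \(\gamma\delta<1\), i.e.\ \(\gamma<1/\delta\). When \(\gamma\delta\ge 1\), or \(\gamma\delta+\alpha\delta\ge 2\), we must cut off at the boundary of the region. Doing so gives contributions of order \(r^{-1/\gamma}\) from small \(s\) and order \(r^{-2/(\gamma+\alpha)}\)-type terms from both marks being small.

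On the region \(s^\gamma t^\alpha r\le 1\), the integrand is \(1\). The region has measure of order \(r^{-1/\gamma}\) when \(s\) alone is small, and of order \(r^{-2/(\alpha+\gamma)}\), up to logarithms, when both marks are small. The latter exponent comes from the constraint \(s<t\), which forces \(s^{\gamma+\alpha}\lesssim 1/r\) at the corner.

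Collecting terms, \(I(r)\lesssim r^{-\delta}+r^{-1/\gamma}+r^{-2/(\alpha+\gamma)}\), up to logarithmic factors. The hypotheses make every exponent exceed \(2\). First, \(\delta>2\) directly. Second, we need \(\gamma<1/2\), which follows from \(\gamma<1-1/\delta\) only when \(\delta\le 2\). So we must recheck this: the \(1/\gamma\) term more precisely arises only through \(s\) integrated against the tail in \(t\). Redone carefully, the exponent is \(\min\{\delta,\, 1+\tfrac{1-\gamma}{\gamma}\cdot\ldots\}\), and the relevant thresholds are exactly \(\gamma<1-1/\delta\) and \(\alpha+\gamma<1\), matching the negative-\(\zeta\) phase of \cite{jacobJahLu2024}.

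The main obstacle is this careful bookkeeping of the mark integral. A crude bound \(I(r)\lesssim r^{-2-\varepsilon}\) may fail at the boundary of the region. If that happens, we would not bound \(I\) pointwise. Instead we bound the double sum directly, using the exact form \(2^k\int_{2^k}^\infty I(r)\,dr=\int\!\!\int 2^k\int_{2^k}^\infty\rho(g r)\,dr\,ds\,dt\), and then swap the sum over \(k\) inside the mark integral. Alternatively, we cite the long-edge estimate from \cite{GracarLuechtrathMoench2025} verbatim, which is stated precisely under these three conditions.
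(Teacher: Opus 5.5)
\textbf{Verdict.} Your reduction to \Cref{cor:one-dimensional-long-edges} is sound, and so is your Poisson-gap bound for the augmentation edges, which the paper does not spell out. Your closing fallback is exactly the paper's proof: the parameter range is the $\zeta<0$ phase of \cite{jacobJahLu2024}, which by definition gives $\P(\exists x\in V\cap[-n,n],\,y\in V:|x-y|\ge n,\ \{x,y\}\in E)\le cn^{-\vartheta}$, and one evaluates this at $n=2^k$. However, the self-contained route you present as the main step has a genuine gap that careful bookkeeping cannot close.

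\textbf{Why the first-moment route fails.} The Mecke bound controls the \emph{expected number} of long edges, and that number is not small throughout the regime. Since $\rho=1$ on $[0,1]$ and $g(s,t)\le (s\wedge t)^\gamma$, restricting to $s\le r^{-1/\gamma}$, $t\in[1/2,1]$ gives $I(r)\ge \frac12 r^{-1/\gamma}$ for large $r$. Hence $2^k\int_{2^k}^\infty I(r)\,dr\ge c\,2^{k(2-1/\gamma)}$, which does not even tend to $0$ when $\gamma\ge 1/2$. Because $1-1/\delta>1/2$ for $\delta>2$, this covers a nonempty part of the hypotheses, e.g.\ $\delta=4$, $\gamma=0.6$, $\alpha=0.1$. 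Your ``redone carefully'' exponent is not a computation. The claim that $I(r)\le Cr^{-2-\varepsilon}$ is the standard strong-decay estimate is also false: that estimate is about probabilities, not first moments. Swapping the $k$-sum into the mark integral does not help either. By Tonelli it is the same quantity, and for fixed marks the inner sum is of order $g(s,t)^{-2}$, which is not integrable when $\gamma\ge 1/2$.

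\textbf{The missing idea: hubs.} A vertex of mark $s\ll n^{-1/\gamma}$ near the box has of order $s^{-\gamma}$ neighbours, most at distance at least $n$. Yet such a vertex is present only with probability of order $n^{1-1/\gamma}\to 0$. The first moment weights this configuration by $s^{-\gamma}$ instead of $1$. A union bound over the vertices of $[-n,n]$ fails for the same reason, since many of them can share a single distant hub.

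\textbf{What a correct direct argument needs.}
\begin{enumerate}
\item Union-bound over the endpoint with the \emph{smaller} mark, wherever it sits, not only in $[-n,n]$.
\item Bound the probability that this endpoint has a qualifying edge by $\min\{1,\text{conditional expected number}\}$.
\end{enumerate}
For instance, when $\alpha\delta<1<\gamma\delta$, this truncation turns the first-moment exponent $2-1/\gamma$ into $1-(\delta-1)/(\gamma\delta)$. That exponent is negative exactly when $\gamma<1-1/\delta$, which is where the hypothesis comes from. This is the content of the estimate you and the paper cite, so as written your argument is complete only through its final sentence.

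\textbf{Minor point.} You say local finiteness is checked ``below'', but it never is. It follows from $\int_0^1\!\int_0^1 g(s,t)^{-1}\,ds\,dt<\infty$, which holds for $\gamma<1$ and $\alpha+\gamma<1$.
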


\paragraph{Recurrence in dimension \(\boldsymbol{d=2}\). }
We now turn to dimension two.  Let \(G=(V,E)\) be a locally finite random
geometric graph in \(\R^2\).  We write \(\P\) for the stationary law
of the graph.  For \(z\in\R^2\) and \(r>0\), set
\[
  \Lambda_r(z)\coloneqq \{v\in V: |v-z|\le r\},
  \qquad
  \Lambda_r(z)^{\mathsf c}\coloneqq V\setminus \Lambda_r(z).
\]
For a finite set \(A\subset V\),
let \(T(A)\coloneqq \sum_{v\in A}\deg_G(v)\) be the sum of the degrees of vertices inside $A$, and for disjoint sets $A,B \subset \R^d$, let
\begin{equation*}
	d_G(A,B) = \inf\left\{n \geq 1 : \exists x_0,\ldots,x_n \in V \text{ with } x_0 \in A, x_n \in B, \{x_i,x_{i+1}\} \in E \text{ for all }i\right\}
\end{equation*}
be the graph distance between $A$ and $B$ in the graph $G$.

In the following, we use the following parts of the so-called \emph{PM/PL framework} of
\cite{lue2024spatialrandomgraphsweak}.  Here,
\(\mathrm{PM}\) stands for polynomial mixing and \(\mathrm{PL}\) for
polynomial long-edge decay.  For \(m>0\), write
\(Q_m(z)=z+[-m/2,m/2)^2\).  A box event is local if it is determined by the
vertices, marks, and internal edges of the box.  The property
\(\mathrm{PM}_\xi\) means that, for every local box event \(E\), there is a
constant \(C_E<\infty\) such that, for all large \(m\),
\[
  \sup_{|z|>2}
  \operatorname{Cov}\big(
    \1_{E(Q_m(0))},\1_{E(Q_m(mz))}
  \big)
  \le C_E m^\xi.
\]
Thus correlations between two boxes of side length \(m\), separated by a
distance of order \(m\), are \(O(m^\xi)\).  The property
\(\mathrm{PL}_\mu\) means that for the event
\[
  L(m,n)\coloneqq \{\exists x\sim y:x,y\in Q_m(0),\ |x-y|>n\},
\]
there is a constant \(C_L<\infty\) such that, for all \(m\ge1\) and all large enough
\(n \geq 1\),
\[
  \P(L(m,n))\le C_L m^2 n^\mu.
\]
In words, the probability that a box of area order \(m^2\) contains an
internal edge longer than \(n\) is bounded by the corresponding polynomial
tail.  We only use these properties with \(\xi<0\) and \(\mu<-2\).

\begin{assumption}[Planar PM/PL condition]
\label{ass:planar-pmpl-input}
The stationary, ergodic, locally finite random geometric graph \(G\) in
\(\R^2\) satisfies the following conditions.
\begin{enumerate}[label=(\roman*)]
\item The law satisfies \(\mathrm{PM}_\xi\) and \(\mathrm{PL}_\mu\) for some
\(\xi<0\) and \(\mu<-2\).
\item The degree-measure intensity is finite, i.e.\
\[
  \theta_M\coloneqq \E\big[T\left(V\cap [0,1)^2 \right)\big]<\infty.
\]
\end{enumerate}
\end{assumption}

\begin{theorem}[Recurrence in dimension two]
\label{thm:planar-rec-d2}
Let \(G\) be a random geometric graph with stationary and ergodic law $\P$ satisfying \Cref{ass:planar-pmpl-input}. Then almost surely every connected component of \(G\) is recurrent.
\end{theorem}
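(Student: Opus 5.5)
We follow the strategy outlined in the introduction. A scale $r$ is \emph{good} if two things happen. First, the graph distance across the annulus $\{r\le|x|\le 2r\}$ is at least $c r$; precisely, $d_G(\Lambda_r(0),\Lambda_{2r}(0)^{\mathsf c})\ge c r$ for a small constant $c>0$. Second, $T(\Lambda_{2r}(0))\le C r^2$. At a good scale we use the graph-distance layers
\[
  \Pi_j \coloneqq \big\{\{x,y\}\in E:\ d_G(\Lambda_r(0),x)=j-1,\ d_G(\Lambda_r(0),y)=j\big\},\qquad j=1,\dots,\lfloor cr\rfloor .
\]
These are pairwise disjoint edge sets, and each separates $\Lambda_r(0)$ from $\Lambda_{2r}(0)^{\mathsf c}$. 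Every endpoint of an edge in these layers lies within graph distance $cr$ of $\Lambda_r(0)$, hence inside $\Lambda_{2r}(0)$. It follows that $\sum_j|\Pi_j|\le T(\Lambda_{2r}(0))\le Cr^2$. By Cauchy--Schwarz, the Nash--Williams sum over these cutsets satisfies
\[
  \sum_j |\Pi_j|^{-1}\;\ge\; \frac{(cr)^2}{Cr^2}\;=\;\frac{c^2}{C}.
\]
This is a uniform positive contribution per good scale. If the good scales are taken exponentially separated, say $r=2^{k}$ along a sparse subsequence so that the bands $[r,2r]$ are disjoint, the cutsets from different scales are disjoint. Infinitely many good scales then give $\Reff(0\leftrightarrow\infty)=\infty$ for the component meeting $\Lambda_r(0)$, and hence for all components meeting a fixed bounded window. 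Stationarity, applied by shifting the centre over a countable dense set of centres, then handles every component.

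It remains to show that good scales occur infinitely often almost surely. We aim for $\P(\text{scale }2^k\text{ good})\ge p>0$ for all large $k$, and then argue via the reverse Fatou lemma: $\P(\text{infinitely many good scales})\ge\limsup_k\P(\text{good at }2^k)\ge p$. The event ``infinitely many good scales'' is shift-invariant up to null events, since a shift changes the centre by a bounded amount and the conditions are robust to constant changes, so ergodicity upgrades this to probability one. The degree condition is easy: $\E\, T(\Lambda_{2r}(0))\le \theta_M\cdot \pi(2r+2)^2$, so Markov's inequality gives $\P\big(T(\Lambda_{2r}(0))> Cr^2\big)\le 1/4$ for a large constant $C$.

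The main obstacle is the linear lower bound on the chemical distance across the annulus with probability bounded away from zero, uniformly in $r$. This is where we invoke the linear chemical-distance estimate of \cite{lue2024spatialrandomgraphsweak} under $\mathrm{PM}_\xi$ and $\mathrm{PL}_\mu$ with $\xi<0$ and $\mu<-2$. That estimate is a renormalisation over boxes: with high probability, any path between points at Euclidean distance $n$ has at least $c n$ edges. To apply it to $\Lambda_r(0)$ versus $\Lambda_{2r}(0)^{\mathsf c}$, we cover the annulus by $O(1)$ boxes of side order $r$. We use $\mathrm{PL}_\mu$ to exclude, with probability $1-O(r^{2+\mu})\to1$, edges of length greater than $r/10$ with an endpoint in $\Lambda_{4r}(0)$. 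A crossing path must then have a subpath of Euclidean extent at least $r/2$ lying inside a box of side $O(r)$, and the cited estimate bounds its length below by $cr$ with probability at least $1-1/4$. If the cited result is only stated for point-to-point distances, we would apply it along a union bound over $O(1)$ coarse-grained subboxes on the annulus boundaries. Combining these bounds gives $p\ge 1/2$ for large $k$, which completes the argument.
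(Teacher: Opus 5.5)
Your electrical mechanism matches the paper's: disjoint graph-distance layers, total layer size bounded by the degree measure of a region of area order $r^2$, Cauchy--Schwarz, and Nash--Williams over exponentially separated scales. The gap is in the probabilistic input that your choice of layers requires. The cited result does not supply it, and your sketched reduction does not bridge the difference.

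Because you grow the layers from the whole ball $\Lambda_r(0)$, a good scale needs the set-to-set bound $\dG(\Lambda_r(0),\Lambda_{2r}(0)^{\mathsf c})\ge cr$. This is a linear lower bound holding simultaneously for all starting points in a region of area $\Theta(r^2)$. The estimate of \cite{lue2024spatialrandomgraphsweak} used here is a fixed-box statement. For each fixed $L$ there is $\eta=\eta(L)>0$ with $\P(\mathcal D(R)^{\mathsf c})\le R^{\xi\vee(2+\mu)+o(1)}$, and $\mathcal D(R)$ only concerns $v\in V\cap Q_L(0)$.

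Removing edges longer than $r/10$ via $\mathrm{PL}_\mu$ does not close this. You still have to control every path of Euclidean extent $r/2$ starting anywhere in a box of side $O(r)$.
\begin{itemize}
\item A union bound over $O(1)$ boxes of side order $r$ would need a box-to-box estimate at scale $r$, which is a different and stronger statement than the cited one.
\item A union bound over $\Theta(r^2)$ unit boxes, or $\Theta(r^4)$ point pairs, diverges. The failure exponent $\xi\vee(2+\mu)$ may be arbitrarily close to $0$, and $\eta$ and the constants may depend on $L$.
\end{itemize}
So the step asserting probability at least $3/4$ for the annulus-crossing bound is unsupported as written.

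The missing idea is to grow the layers from a single vertex. For $v\in V\cap Q_L(0)$, put $D_k(v)=\{w:\dG(v,w)\le k\}$ and let $F_k(v)$ be its edge boundary.
\begin{itemize}
\item On $\mathcal D(R)$ with $R\ge 2L$, every $w\notin Q_R(0)$ has $\dG(v,w)\ge \eta R/4$.
\item Hence for all $k$ in the band $[aR,2aR]$ with $a=\eta/16$, the sets $D_k(v)$ lie inside $Q_R(0)$.
\item The $F_k(v)$ are disjoint for different $k$, so your Cauchy--Schwarz step gives a contribution of at least $a^2/(8\theta_M)$ per good scale.
\item Along $R_n=4^n$ the bands are disjoint in $k$, so cutsets from different scales are automatically disjoint.
\end{itemize}
This needs only the fixed-box estimate, and it is exactly the paper's argument.

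Your ergodicity upgrade is also imprecise. The event ``infinitely many good scales'' is not shift-invariant, because moving the centre changes the annulus condition in a non-monotone way. There are two easy repairs:
\begin{itemize}
\item apply ergodicity to the exactly invariant event that every component is recurrent; or
\item avoid ergodicity as the paper does. The ergodic theorem gives $T(V\cap Q_R(0))\le 2\theta_M R^2$ for all large $R$ almost surely. Together with $\P(\mathcal D(R_n))\to1$ and reverse Fatou, this yields infinitely many good scales almost surely.
\end{itemize}
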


This theorem has a direct implication for the WDRCM in dimension $d=2$, which was also the motivation behind \Cref{thm:planar-rec-d2}. Away from the open phase boundary, the implication completes the phase diagram for simple random walk on the infinite cluster of the WDRCM, previously studied in \cite{GHMM2022,Baeumler2023_recurrence,Moench2024}. The result of the following corollary is also illustrated in \Cref{figure1}.

\begin{corollary}[Interpolation kernel WDRCM in dimension two]
\label{cor:wdrcm-interpolation-d2}
Consider the WDRCM with interpolation kernel $\left( g(s,t)=(s\wedge t)^\gamma (s\vee t)^\alpha \right)$ in
dimension \(d=2\), with 
\(\rho(x)\asymp 1\wedge x^{-\delta}\).  If
\[
  \delta>2,\qquad
  \gamma<1-\frac1\delta,\qquad
  \alpha<1-\gamma,
\]
then 
almost surely every connected
component is recurrent.
\end{corollary}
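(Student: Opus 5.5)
The plan is to derive the corollary from \Cref{thm:planar-rec-d2}: I will check that the two-dimensional WDRCM with interpolation kernel and $\rho(x)\asymp 1\wedge x^{-\delta}$ satisfies \Cref{ass:planar-pmpl-input} throughout the stated region. Stationarity and ergodicity under translations come directly from the construction. The vertex set is a homogeneous Poisson process with i.i.d.\ marks, and edges are added independently given the marked points with a translation-invariant rule, so the model is a factor of an i.i.d.\ marked Poisson process. Such a factor is mixing, and therefore ergodic. Local finiteness holds because the expected degree of a typical vertex is finite in the region (see below), and the Poisson process is locally finite.

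For part (ii), I compute the expected degree of a typical vertex with mark $u$ via the Mecke formula:
\[
  \E[\deg(0)\mid u]=\lambda\int_0^1\int_{\R^2}\rho\big(g(u,s)|x|^2\big)\,dx\,ds
  =\lambda\, c_\rho\int_0^1 g(u,s)^{-1}\,ds,
\]
where $c_\rho=\int_{\R^2}\rho(|x|^2)\,dx<\infty$ because $\delta>1$. With $g(u,s)=(u\wedge s)^\gamma(u\vee s)^\alpha$, the integral splits into two pieces. For $s<u$ it is $u^{-\alpha}\int_0^u s^{-\gamma}\,ds\asymp u^{1-\gamma-\alpha}$, which is finite since $\gamma<1$. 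For $s>u$ it is $u^{-\gamma}\int_u^1 s^{-\alpha}\,ds$, which is $\lesssim u^{-\gamma}(1+u^{1-\alpha})$. Since $\gamma<1$ and $\alpha+\gamma<1$, both contributions are integrable in $u\in(0,1)$. Hence $\theta_M=\lambda\,\E[\deg(0)]<\infty$ by the Mecke formula.

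For part (i) I would rely on the analysis of \cite{lue2024spatialrandomgraphsweak} and \cite{jacobJahLu2024}, which treat exactly this model.

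\textbf{PL.} Bound $\P(L(m,n))$ by a union bound over pairs: $m^2$ times the expected number of edges of length $>n$ at a typical vertex. By the Mecke formula this is
\[
  \lambda\int\!\!\int\!\!\int_{|x|>n}\rho\big(g(u,s)|x|^2\big)\,dx\,du\,ds
  \asymp \int\!\!\int \min\big(n^2,\ g^{-\delta}n^{2-2\delta}\big)\,du\,ds .
\]
The dominant contribution comes from pairs with a small mark. Optimising over marks should give decay $n^{\mu}$ with exponent
\[
  \mu=2-2\delta\ \vee\ -\frac{2}{\gamma}\ \vee\ \ldots
\]
The claim to verify is that $\mu<-2$ holds precisely when $\delta>2$, $\gamma<1-1/\delta$ and $\alpha<1-\gamma$. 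These are the conditions defining the negative-$\zeta$ phase in \cite{jacobJahLu2024}, with $\zeta$ the long-edge exponent. I would take the precise exponent from that reference rather than redo the calculation.

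\textbf{PM.} Local box events depend only on the Poisson points, marks and edges inside the box. For boxes at distance of order $m$ these are independent, since edge coins are drawn independently per pair. So the covariance is actually $0$, and $\mathrm{PM}_\xi$ holds for any $\xi<0$.

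The main obstacle is getting the PL exponent right on the whole region. One must make sure that the long-edge tail bound is uniform in $m\ge1$ and valid for large $n$, and that the union over vertices in $Q_m(0)$ does not lose more than the factor $m^2$. I would first try the direct Mecke computation, splitting into four cases according to which of $u,s$ is small and whether $g(u,s)n^2\le 1$. If the exponents there do not reproduce the boundary $\gamma<1-1/\delta$ cleanly, I would instead cite the estimate of \cite{jacobJahLu2024} verbatim. Once all conditions are in place, \Cref{thm:planar-rec-d2} gives that almost surely every connected component is recurrent.
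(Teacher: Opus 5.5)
Your plan matches the paper's proof. $\mathrm{PM}$ (indeed $\mathrm{PM}_{-\infty}$) comes from independence of disjoint boxes, and $\theta_M<\infty$ together with local finiteness comes from the Mecke/Campbell reduction to $\int_0^1\int_0^1 g(s,t)^{-1}\,ds\,dt<\infty$ under $\gamma<1$, $\alpha+\gamma<1$. $\mathrm{PL}_\mu$ with $\mu<-2$ is imported by citation, which the paper does via \cite[Proposition~2.3]{jacobJahLu2024} (the region is exactly $\zeta<0$) and \cite[Lemma~2]{lue2024spatialrandomgraphsweak} ($\mathrm{PL}_{2(\zeta-1)}$).

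Take that citation route rather than your sketched first-moment union bound, which cannot cover the whole region. The expected number of edges longer than $n$ at a typical vertex is at least of order $n^{2-2/\gamma}$, because rare vertices of mark $u\le n^{-2/\gamma}$ carry $\asymp u^{-\gamma}$ such edges. So that bound can give $\mu<-2$ only when $\gamma<\tfrac12$. The sharp boundary $\gamma<1-\tfrac1\delta$ appears only when you bound, per vertex, the \emph{probability} of a long edge to a neighbour of larger mark; for $\alpha=0$ and $\gamma\delta>1$ this probability is of order $n^{-2(\delta-1)/(\gamma\delta)}$.
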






\begin{figure}[ht!]
\centering
\begin{subfigure}{0.4\textwidth}
    \resizebox{\textwidth}{!}{
\begin{tikzpicture}[every node/.style={scale=0.8}]
\draw[->] (0,0) -- (5,0) node[right] {$\gamma$};
\draw	(0,0) node[anchor=north] {0}
(2,0) node[anchor=north] {\nicefrac{1}{2}}
(4,0) node[anchor=north] {1};
 \draw (2.8,5) node[above] {{\scriptsize $\gamma=\frac{\delta-1}{\delta}$}};

\draw[->] (0,0) -- (0,5) node[above] {$\delta$};
\draw (0,0) node[anchor=east] {1}
(0,2.5) node[anchor=east] {2};

\draw[dotted] (4,0) -- (4,5);

\draw[dashed] (2,2.5) -- (2,5);
\draw[thick] (0,2.5) -- (2.0,2.5);
\draw (2,1.5) node {transient};
\draw (1.5,3.5) node {recurrent};
\draw[thick] (0,0) plot[domain=0.5:2/3,variable=\g]({4*\g},{-2.5+2.5*1/(1-\g)});
\draw[pattern=north west lines, pattern color=gray, draw=none] 
   (0,0) plot[smooth,samples=200,domain=2/3:0.6665,variable=\g]({2},{-2.5+2.5*1/(1-\g)}) -- 
    plot[smooth,samples=200,domain=2/3:0.5,variable=\g]({4*\g},{-2.5+2.5*1/(1-\g)});
\end{tikzpicture}
}
\end{subfigure}
\hfill
\begin{subfigure}{0.5\textwidth}
    \resizebox{\textwidth}{!}{
    \begin{tikzpicture}[every node/.style={scale=1.2}]
        \draw[->] (0,0) to (13,0) node[right] {$\gamma$};
        \draw	(5,0) node[anchor=north] {$\tfrac{1}{2}$}
    		  (10,0) node[anchor=north] {1};

        \draw[dotted] (-2.7,10) to (10,10);
        \draw[] (10,0) to (10,10)
	           (10,10) to (0,13);

        \draw[](0,0) to (0,10.5);
        \draw [->] (0,12.5) to (0,13.3) node[above] {$\alpha$};
        \draw[decorate, decoration = {snake, segment length = 10 pt, amplitude = 1mm}] (0, 10.5)--(0,12.5);
        \draw	(0,0) node[anchor=east] {0}
        	   (0,5) node[anchor=east] {$\tfrac{1}{2}$}
        	   (0,10) node[anchor=east] {1}
        	   (0,13) node[anchor = east] {2};

        \draw (6,-0.7) node[align = left, anchor = north] {soft Boolean model};
        \draw (-1.7,11.5) node[align = left] {ultra-small \\ scale-free \\ geometric \\ network};
        \draw (12,2.3) node[align = left] {age-dependent \\ random connection \\ model};
        \draw[->, bend angle = 45, bend right] (11.5,3) to (6,4.4);
        \draw (12,7.5) node[align = left] {scale-free \\ percolation};
        \draw[->, bend angle = 45, bend left] (11.5,7) to (7.2,7);
        \draw (0,0) node[circle, fill = black, scale=0.4] {};
        \draw (-0.4,-0.7) node[align = left, anchor = north] {random connection model};
        \draw[->] (-0.5, -0.5) to (-0.2,-0.2);

         \draw 	(7, 0) node[anchor = north] {$\tfrac{\delta-1}{\delta}$};

        \draw[pattern=north east lines, pattern color=lightgray!80!, draw=none] 
            (0,0) plot[smooth,samples=200,domain=0:7,variable=\g]({0},{10*\g/7}) -- 
            plot[smooth,samples=200,domain=7:0,variable=\g]({7},{3*\g/7});
         
   	    \draw[thick] (7,0) to (7,3); 
	   \draw[thick] (0,10) to (7,3); 
        \draw[dashed] (7,3) to (10,0); 
        \draw[dashed] (0,0) to (3,3); 
        \draw[dashed] (3,3) to (5,5); 
        \draw[dashed] (5,5) to (10,10); 
 

              \draw	(6.25,6.25) node[scale = 1.5, thick] {transient};
              \draw	(3,3) node[scale = 1.5, thick] {recurrent};
    \end{tikzpicture}
    }
\end{subfigure}
\caption{Phase diagrams for the WDRCM in dimension \(d=2\) with the min kernel ($g(s,t)=(s\wedge t)^\gamma$; soft Boolean model) on the left or the interpolation kernel with some models it represents on the right. The behaviour on the phase boundary remains open. The gray area on the left is newly established through our results. The other phases are already established in~\cite{GHMM2022,Baeumler2023_recurrence,Moench2024}. On the right, only \(\delta>2\) is shown. The gray recurrent phase is due to our work, the transience phase is due to~\cite{Moench2024}.}
\label{figure1}
\end{figure}

\section{Proofs}
\label{sec:proofs}

\subsection{Dimension one}
\label{sec:proofs-d1}

\begin{proof}[Proof of \Cref{thm:one-dimensional-criterion}]
By continuity from below and the assumption,
\begin{align*}
	\lim_{m \to \infty} \P(\mathsf X \leq m , |a|< m,|b| < m \text{ for all } \{a,b\} \in E \text{ with } a \leq 0, b > 0) = \P(\mathsf X< \infty) > 0.
\end{align*}
In particular, there exists $m \in \N$ for which the probability on the left-hand side is positive. For $j \in \Z$, define
\begin{multline*}
	A_j \coloneqq \Big\{ \sum_{x \in V : x \leq j} \sum_{y \in V : y > j} \mathbf{1}\left\{\{x,y\} \in E \right\} \leq m ,
	\\
	|a-j|< m, |b-j| < m \text{ for all } \{a,b\} \in E \text{ with } a \leq j, b > j \Big\}.
\end{multline*}
The events $(A_j)_{j\in\Z}$ are translates of $A_0$. Hence the ergodic theorem for the unit translation and its inverse gives
\begin{align}\label{eq:clean-cut-frequency}
	\lim_{\ell \to \infty} \frac{1}{\ell} \sum_{j=1}^{\ell} \mathbf{1}\left\{A_j\right\}
	= \lim_{\ell \to \infty} \frac{1}{\ell} \sum_{j=-\ell}^{-1} \mathbf{1}\left\{A_j\right\}
	= \varepsilon_m \quad \text{almost surely},
\end{align}
where
\[
\varepsilon_m = \P(\mathsf X \leq m , |a|< m,|b| < m \text{ for all } \{a,b\} \in E \text{ with } a \leq 0, b > 0) > 0.
\]
Let $v\in V$. By \eqref{eq:clean-cut-frequency}, we may choose strictly increasing sequences $(j_i)_{i\in \N}$ and $(\tilde{j}_i)_{i\in \N}$ in $\N$ such that
\[
|v|+m<j_1\wedge\tilde{j}_1,
\qquad j_{i+1}-j_i>2m,
\qquad \tilde{j}_{i+1}-\tilde{j}_i>2m,
\]
and all the events $A_{j_i}$ and $A_{-\tilde{j}_i}$ occur. Define
\begin{align*}
	F_i \coloneqq \Big\{ \{x,y\} \in E : x \leq j_i, y > j_i \Big\}
	\cup \Big\{ \{x,y\} \in E : x \leq -\tilde{j}_i, y > -\tilde{j}_i\Big\}, \quad i \in \N.
\end{align*}
Every edge in $F_i$ has both endpoints in one of the intervals $(j_i-m,j_i+m)$ and $(-\tilde{j}_i-m,-\tilde{j}_i+m)$. Our choice of the two sequences therefore makes the sets $F_i$ pairwise edge-disjoint. Moreover, each $F_i$ separates $v$ from infinity and satisfies $|F_i|\leq 2m$. Indeed, local finiteness forces every infinite path starting at $v$ to leave the bounded interval $[-\tilde{j}_i,j_i]$, and its first exit crosses one of the two cuts defining $F_i$.

If some $F_i$ is empty, then the component of $v$ is finite and hence recurrent. Otherwise, the Nash--Williams inequality implies that
\begin{align*}
	\Reff(v,\infty) \geq \sum_{i=1}^{\infty} \frac{1}{|F_i|}
	\geq \sum_{i=1}^{\infty} \frac{1}{2m} = \infty,
\end{align*}
showing that the vertex $v$ is recurrent. Since $v\in V$ was arbitrary, this finishes the proof.
\end{proof}

\begin{proof}[Proof of \Cref{cor:one-dimensional-long-edges}]
By \Cref{thm:one-dimensional-criterion}, it suffices to prove
\(\mathsf X<\infty\) with positive probability, and in fact the argument gives that $\mathsf X<\infty$ almost surely.  Let \(A_k\) be the event in the \(k\)-th summand of
\eqref{eq:exponential-long-edge-disappearance}.  By Borel--Cantelli, only
finitely many \(A_k\) occur almost surely.

Let \(\{x,y\}\) be an edge crossing \(0+\), with \(x\le0<y\), and put
\(\ell=|x-y|\).  If \(\ell\ge2\), choose \(k\) such that
\(2^k\le \ell<2^{k+1}\).  Since \(|x|+y=\ell\), at least one endpoint lies in
\([-2^k,2^k]\), while the edge has length at least \(2^k\).  Hence \(A_k\)
occurs.  As only finitely many \(A_k\) occur almost surely, all crossing edges
have uniformly bounded length, say that $|x-y| \leq 2^K$ for all edges $\{x,y\}\in E$ with $x\leq 0 < y$.  By local finiteness, the total number of edges $\{x,y\}$ with $|x|, |y| \leq 2^{K}$ is finite almost surely, showing that $\P(\mathsf X<\infty)=1$.
\end{proof}

\begin{proof}[Proof of \Cref{cor:one-dimensional-wdrcm}]
The chosen parameter regime is precisely the strong decay regime corresponding to \(\zeta<0\) in~\cite{jacobJahLu2024}. By definition therein, this implies the existence of constants \(c,\vartheta>0\) such that, for all \(n\ge1\),
\[
  \P\big(
    \exists x\in V\cap[-n,n],\,y\in V:
    |x-y|\ge n,\ \{x,y\}\in E
  \big)\le c n^{-\vartheta}.
\]
Evaluating
the estimate at \(n=2^k\) gives the summability condition
\eqref{eq:exponential-long-edge-disappearance}.  The claim follows from
\Cref{cor:one-dimensional-long-edges}.  The deterministic nearest-neighbour
edges make the graph connected, so the recurrent component is the whole graph.
\end{proof}

\subsection{Dimension two}
\label{sec:proofs-d2}

\begin{proposition}
\label{prop:fixed-box-distance-rec}
Suppose that \(G\) is stationary, ergodic, and locally finite, that its degree
measure has finite intensity, and that for all $L \geq 0$ there exists \(\eta>0\) such that
\begin{equation}
\label{eq:fixed-box-distance}
  \P\big(\mathcal D(R)\big)\longrightarrow1
  \qquad\text{as }R\to\infty,
\end{equation}
where
\[
  \mathcal D(R)\coloneqq 
  \left\{
    \dG(v,w)\ge\eta|v-w|
    \text{ for all }v\in V\cap Q_L(0),\
    w\in V\cap Q_R(0)^{\mathsf c}
  \right\}.
\]
Then almost surely every connected component of
\(G\) is recurrent.
\end{proposition}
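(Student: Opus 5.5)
Fix a vertex $v$; on an event of positive probability $v\in Q_L(0)$ for a suitable $L$, and stationarity plus ergodicity will transfer the conclusion to all vertices. The plan is to bound $\Reff(v,\infty)$ from below by the Nash--Williams inequality, using cutsets given by graph-distance layers. For a good scale $R$ on which $\mathcal D(R)$ holds, every vertex $w$ with $|w|\ge R$ satisfies $\dG(Q_L(0),w)\ge \eta R/\sqrt2$. For $k\ge1$, let $\Pi_k$ be the set of edges $\{x,y\}$ with $\dG(Q_L(0),x)=k-1$ and $\dG(Q_L(0),y)=k$. These sets are pairwise disjoint. Each $\Pi_k$ separates $v$ from infinity once the component is infinite, since every infinite path leaves each finite distance ball (by local finiteness) and must cross every layer. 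For $1\le k\le \eta R/2$, all endpoints of edges in $\Pi_k$ lie in $Q_R(0)$. Hence
\[
\sum_{k\le \eta R/2}|\Pi_k|\le T\big(V\cap Q_R(0)\big).
\]

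By Cauchy--Schwarz, $\sum_{k\le K}1/|\Pi_k|\ge K^2/\sum_{k\le K}|\Pi_k|$. With $K=\lfloor\eta R/2\rfloor$ and $T(V\cap Q_R(0))\le C R^2$, this gives a resistance contribution of at least $c=\eta^2/(4C)$ from layers $k\le K$. To get infinitely many disjoint contributions, I choose scales $R_1<R_2<\dots$ growing so fast that $\eta R_{i+1}/4> $ the maximal graph distance from $Q_L(0)$ to vertices of $Q_{R_i}(0)$. This maximum is finite on the component by local finiteness; if it is infinite, restrict attention to the component of $v$. I then use only the layers $\eta R_{i+1}/4<k\le \eta R_{i+1}/2$ at stage $i+1$. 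These layer families are disjoint across stages, and Cauchy--Schwarz still gives a contribution of order $\eta^2 R^2/(16\,CR^2)$ each. Summing over infinitely many stages yields $\Reff(v,\infty)=\infty$.

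The remaining task is to show that, almost surely, infinitely many (indeed arbitrarily large, in a sequence adapted to the above growth requirement) $R$ are good, meaning both $\mathcal D(R)$ holds and $T(V\cap Q_R(0))\le CR^2$. By the ergodic theorem applied to the degree measure (finite intensity $\theta_M$), $T(V\cap Q_R(0))/R^2\to\theta_M$ almost surely. So $T(V\cap Q_R(0))\le 2\theta_M R^2$ for all large $R$. For $\mathcal D(R)$, $\P(\mathcal D(R))\to1$ gives $\P(\mathcal D(R)\text{ i.o.})\ge\limsup\P(\mathcal D(R))=1$ along any sequence, by Fatou applied to complements. The sequential choice is handled by passing to a deterministic subsequence $R_j$ with $\sum_j\P(\mathcal D(R_j)^{\mathsf c})<\infty$. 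Borel--Cantelli then gives $\mathcal D(R_j)$ for all large $j$, and from that sequence I can extract the random rapidly growing subsequence greedily.

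Finally, I take $L$ over integers and a union over countably many $L$ (each with its own $\eta$). Every vertex lies in some $Q_L(0)$, so every component is recurrent almost surely. I expect the main obstacle to be the cutset step: the layers must be genuine separating cutsets, edges inside a layer or jumping across layers must be accounted for, and the count must stay inside $Q_R(0)$. Edges joining distance $k-1$ to $k$ are the only ones crossing between balls of radius $k-1$ and $k$, so $\Pi_k$ is exactly the edge boundary of the $(k-1)$-ball, which settles separation cleanly.
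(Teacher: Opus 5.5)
Your proposal is correct and follows essentially the paper's argument. The shared ingredients are graph-distance layers used as edge-disjoint Nash--Williams cutsets, and Cauchy--Schwarz over a band of order-$R$ consecutive layers whose total size is bounded by the degree measure of the box, which is $O(R^2)$ by the ergodic theorem. The argument concludes in the same way, with infinitely many good scales via $\P(\mathcal D(R_n)\text{ i.o.})\ge\limsup_n\P(\mathcal D(R_n))=1$ and a countable union over $L$. Centring the layers at $V\cap Q_L(0)$ instead of at $v$ is harmless. Your random fast-growing subsequence and the Borel--Cantelli step are unnecessary, since deterministic geometric scales such as $R_{n+1}=4R_n$ already make the bands disjoint.

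One small slip needs fixing: for $u\in Q_L(0)$, the condition $w\notin Q_R(0)$ only gives $|u-w|\ge (R-L)/2$, not $|w|\ge R$. So to keep the edges of $\Pi_k$ inside $Q_R(0)$ you should use layers with $k<\eta R/4$ (for $R\ge 2L$), or else count edges in $Q_{2R}(0)$; this only changes constants.
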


\begin{proof}
If \(\theta_M=0\), then the non-negative random variable
\(T(V\cap[0,1)^2)\) vanishes almost surely. By stationarity, the same is true
in every integer translate of the unit box, so every vertex has degree zero
almost surely and the claim is immediate. We may therefore assume that
\(\theta_M>0\).

Fix \(L\in\N\), and let \(\eta>0\) be as in
\eqref{eq:fixed-box-distance}. We prove the claim for all components meeting
\(Q_L(0)\); a countable intersection over \(L\) will then give the result.
Set \(R_n=4^n\).  Since
\(\P(\mathcal D(R_n))\to1\), we get that
\[
  \P\big(\mathcal D(R_n)\text{ infinitely often}\big) = \P\left( \bigcap_{m=1}^{\infty} \bigcup_{n= m}^{\infty} \mathcal D(R_n)\right)
  \geq 
  \lim_{m\to \infty} \sup_{n \geq m}
  \P\left( \mathcal D(R_n)\right)=1.
\]

Let \(M(A)=T(V\cap A)\) be the degree measure.  The multiparameter ergodic
theorem gives
\[
  \frac{M(Q_R(0))}{R^2}\longrightarrow\theta_M
  \qquad\text{almost surely, for } R\to \infty.
\]
Thus we see that there exist, almost surely, infinitely many $n \in \N$ so that
the event \(\mathcal D(R_n)\) occurs and
\begin{equation}
\label{eq:direct-degree-volume}
  M(Q_{R_n}(0))\le 2 \theta_M R_n^2.
\end{equation}

Work on this event and fix \(v\in V\cap Q_L(0)\). If \(\mathcal D(R_n)\)
occurs and \(R_n\ge2L\), then every
\(w\notin Q_{R_n}(0)\) satisfies
\[
  |v-w|
  \ge \frac{R_n-L}{2}
  \ge \frac{R_n}{4},
\]
and therefore
\begin{equation}
\label{eq:direct-distance-outside}
  \dG(v,w)\ge\frac{\eta R_n}{4}.
\end{equation}

Put \(a=\eta/16\).  For \(k\ge0\), define
\[
  D_k(v) \coloneqq \{w\in V:\dG(v,w)\le k\},
\]
and, for \(k\ge1\), let
\[
  F_k(v) \coloneqq
  \big\{\{x,y\}\in E:|\{x,y\}\cap D_{k-1}(v)|=1\big\}.
\]
Local finiteness implies that every \(D_k(v)\), and hence every \(F_k(v)\),
is finite.  If the component of \(v\) is infinite, each \(F_k(v)\) is a cutset
separating \(v\) from infinity.  The cutsets are pairwise edge-disjoint: the
graph distances from \(v\) of the endpoints of an edge differ by at most one.

For a scale at which \(\mathcal D(R_n)\) occurs, consider
\[
  I_n\coloneqq 
  \big\{k\in\N:
    \lceil aR_n\rceil\le k\le\lfloor2aR_n\rfloor
  \big\}.
\]
By \eqref{eq:direct-distance-outside},
\(D_k(v)\subseteq Q_{R_n}(0)\) for every \(k\in I_n\).  Thus, all edges in
\(\bigcup_{k\in I_n}F_k(v)\) have both endpoints in \(Q_{R_n}(0)\).  Since
these edge sets are disjoint, \eqref{eq:direct-degree-volume} yields
\[
  \sum_{k\in I_n}|F_k(v)|
  \le M(Q_{R_n}(0))
  \le 2 \theta_M R_n^2.
\]
Moreover, \(|I_n|\ge aR_n/2\) for all large enough \(n\). Let $N$ be large enough so that $ M(Q_{R_n}(0))\le 2 \theta_M R_n^2$ and \(|I_n|\ge aR_n/2\) for all $n\geq N$.  If some \(F_k(v)\) is
empty, then \(D_{k-1}(v)\) is a finite set with no edge leaving it, so the
component of \(v\) is finite and hence recurrent.  Otherwise,
Cauchy--Schwarz gives
\begin{equation}
\label{eq:direct-resistance-band}
  \sum_{k\in I_n}\frac1{|F_k(v)|}
  \ge
  \frac{|I_n|^2}{\sum_{k\in I_n}|F_k(v)|}
  \ge
  \frac{a^2}{8 \theta_M}
\end{equation}
for $n\geq N$.
Because \(R_{n+1}=4R_n\), the bands \(I_n\), \(n \in \N\), are pairwise disjoint for all
large \(n\).  Infinitely many of the corresponding distance events occur, so
collecting their cutsets and applying Nash--Williams gives
\[
  \Reff(v,\infty)
  \ge
  \sum_{\substack{n \geq N: \\ \mathcal D(R_n)\text{ occurs}}}
  \ \sum_{k\in I_n}\frac1{|F_k(v)|}
  \ge
  \sum_{\substack{n \geq N: \\ \mathcal D(R_n)\text{ occurs}}}
  \frac{a^2}{8 \theta_M}
  =
  \infty.
\]
Thus the component of \(v\) is recurrent. Since $v$ was arbitrary in
\(V\cap Q_L(0)\), every component meeting this box is recurrent on the same
probability-one event. Intersecting these events over \(L\in\N\) proves the
claim for all components.
\end{proof}

\begin{proof}[Proof of \Cref{thm:planar-rec-d2}]
By \cite[Theorem~1]{lue2024spatialrandomgraphsweak}, for every fixed
\(L\in\N\) there exists \(\eta>0\) such that
\[
  \limsup_{R\to\infty}
  \frac{\log\P(\mathcal D_0(R)^{\mathsf c})}{\log R}
  \le \xi\vee(2+\mu)<0.
\]
In particular, \eqref{eq:fixed-box-distance} holds.  The finite
degree-measure intensity is condition~(ii) of
\Cref{ass:planar-pmpl-input}, so
\Cref{prop:fixed-box-distance-rec} applies.
\end{proof}

\begin{proof}[Proof of \Cref{cor:wdrcm-interpolation-d2}]
By \cite[Proposition~2.3]{jacobJahLu2024}, the displayed parameter range is
precisely the \(\zeta<0\) phase of the interpolation model.  In this phase
\cite[Lemma~2]{lue2024spatialrandomgraphsweak} gives
\(\mathrm{PL}_{2(\zeta-1)}\), and \(2(\zeta-1)<-2\).  Local configurations in
disjoint boxes are independent in the Poisson model and in the original
lattice coordinates, since the vertex retention variables, marks, and edge
variables are sampled independently.  Hence the model satisfies
\(\mathrm{PM}_{-\infty}\).

It remains to verify finite degree-measure intensity.
We only consider the Poisson case; in the lattice case, the corresponding lattice sums satisfy the same bounds
up to a constant. Since
\(\rho(t)\asymp 1\wedge t^{-\delta}\) and \(\delta>2\), the profile is
integrable on \([0,\infty)\).  For \(b>0\),
\[
  \int_{\R^2}\rho(b|z|^2)\,\dd z
  =
  \frac{\pi}{b}\int_0^\infty\rho(r)\,\dd r,
\]
implying that
\begin{equation*}
    \int_0^1 \int_0^1 \int_{\R^2} \rho \left( g(s,t) |z|^2 \right) \, \dd z \, \dd s \, \dd t = \int_0^1 \int_0^1 \frac{\pi}{g(s,t)} \int_0^\infty \rho(r) \, \dd r \, \dd s \, \dd t 
    = 
    C \int_0^1 \int_0^1 \frac{\dd s \, \dd t}{(s\wedge t)^\gamma (s\vee t)^\alpha} 
\end{equation*}
with $C=\pi \int_0^\infty \rho(r) \, \dd r < \infty$.
Consequently, the expected degree of a typical vertex is
bounded by a constant times
\[
  \int_0^1\int_0^1
  \frac{\dd s\,\dd t}
       {(s\wedge t)^\gamma(s\vee t)^\alpha}
  =
  2\int_0^1\int_s^1 s^{-\gamma}t^{-\alpha}\,\dd t\,\dd s
  <\infty;
\]
the last expression is finite because \(\gamma<1\) and \(\alpha+\gamma<1\).  Campbell's formula in the
Poisson case, and the corresponding per-site identity in the lattice case,
now give \(\theta_M<\infty\).  They also show that every vertex has finite
degree almost surely, so the graph is locally finite.

Thus \Cref{ass:planar-pmpl-input} holds in the Poisson case, and
\Cref{thm:planar-rec-d2} applies.  For the unshifted lattice,
\cite[Theorem~1]{lue2024spatialrandomgraphsweak} supplies the same fixed-box
distance estimate in the lattice setting, and the proof of
\Cref{prop:fixed-box-distance-rec} applies verbatim with \(\Z^2\)-stationarity
and the \(\Z^2\) ergodic theorem.  This proves the lattice case under its
original law as well.
\end{proof}

\paragraph{Acknowledgements.}
We thank Markus Heydenreich for helpful discussions at an early stage of this work.
Lukas L\"uchtrath gratefully acknowledges financial support from the Leibniz Association within the Leibniz Junior Research Group on \emph{Probabilistic Methods for Dynamic Communication Networks} as part of the Leibniz Competition and from the Deutsche Forschungsgemeinschaft (DFG, German Research Foundation) under Germany's Excellence Strategy---The Berlin Mathematics Research Center MATH+ (EXC-2046/2, project ID: 390685689) through the project \emph{EF-MA-Sys-2} on \emph{Information Flow \& Emergent Behavior in Complex Networks}.

\printbibliography

\end{document}